\documentclass{article}
\usepackage[utf8]{inputenc}
\usepackage{amsmath, amssymb, amsthm}
\usepackage{mathrsfs}
\usepackage{hyperref}
\usepackage{enumitem}
\usepackage{geometry}
\newtheorem{theorem}{Theorem}[section]
\newtheorem{proposition}[theorem]{Proposition}
\newtheorem{lemma}[theorem]{Lemma}
\newtheorem{condition}[theorem]{Condition}
\newtheorem{remark}[theorem]{Remark}
\theoremstyle{definition}

\newcommand{\N}{\mathbb{N}}

\newcommand{\C}{\mathbb{C}}

\newcommand{\supp}{\operatorname{supp}}

\newcommand{\eps}{\varepsilon}

\begin{document}
	
	\title{A Colombeau–Beurling criterion for the Riemann hypothesis}
	\author{A. \'Alvarez Cruz\thanks{Instituto de Computa\c{c}\~ao, UFRJ, Brazil \texttt{amaury@ic.ufrj.br}}
		\and E.A. \'Alvarez Guti\'errez\thanks{Col\'egio Pedro II, Campus Centro, Brazil \texttt{esteban.gutierrez.1@cp2.edu.br}}}
	\date{May 2025}
	\maketitle
	
	\begin{abstract}
		This paper establishes an equivalence between the Riemann hypothesis and the association, together with uniform $L^2$‑boundedness, of a single moderate net in a Colombeau‑type algebra built from damped Báez–Duarte sums. The regularization is performed by multiplicative (Mellin) convolution, which respects the underlying dilation symmetry of the Beurling functions and guarantees that every approximant lies in the $L^2$‑closure of the Beurling space. Two concrete damping strategies are presented: an exponential damping $e^{-k\eps^2}$ with super‑exponential truncation, and a polynomial damping $k^{-\delta(\eps)}$, $\delta(\eps)=(\log(1/\eps))^{-\alpha}$, with polynomial truncation. Under the Riemann hypothesis the nets are shown to be moderate, associated to $-\iota(\chi)$ and uniformly $L^2$‑bounded; conversely, any net of this form possessing these three properties forces the Riemann hypothesis to hold.
	\end{abstract}
	
	\section{Introduction}
	
	The Riemann hypothesis (RH) asserts that all non‑trivial zeros of the Riemann zeta function $\zeta(s)$ lie on the critical line $\operatorname{Re}(s)=1/2$. The classical Nyman–Beurling criterion~\cite{Nyman1950,Beurling1955} states that RH holds if and only if the characteristic function $\chi=\mathbf{1}_{(0,1)}$ belongs to the $L^2$‑closure of the span of the functions $\rho(\theta/x)=\{\theta/x\}$ for $\theta\in(0,1]$. Báez–Duarte~\cite{BaezDuarte2003} proved that one may restrict $\theta$ to the discrete set $\{1/k:k\in\N\}$ and still obtain an equivalent condition:
	\[
	\chi\in\overline{\operatorname{span}\{\varphi_{1/k}\}}^{L^2(0,1)},
	\]
	where $\varphi_\theta(x)=\{\theta/x\}$.
	
	The proof of Báez–Duarte relies on the fundamental Möbius identity
	\begin{equation}\label{eq:Mobius-id}
		\sum_{k=1}^{\infty} \mu(k)\,\varphi_{1/k}(x)= -\chi(x),\qquad 0<x\le 1,
	\end{equation}
	which converges pointwise and, under RH, in $L^2(0,1)$. This suggests studying damped sums
	\[
	f_{\delta,n}(x)=\sum_{k=1}^{n}\mu(k)\,\phi_k(\delta)\,\varphi_{1/k}(x),
	\]
	where $\phi_k(\delta)$ is a damping factor with $\phi_k(\delta)\to1$ as $\delta\to0$. Under RH one obtains the double limit
	\[
	\lim_{\delta\to0^+}\lim_{n\to\infty}f_{\delta,n}= -\chi\quad\text{in }L^2(0,1)
	\]
	by Abel summation in Hilbert spaces (cf.~\cite[Theorem~2]{BaezDuarte2003}).
	
	In this paper the double limit is translated into a Colombeau‑type algebra of generalized functions adapted to the multiplicative structure of the problem. The classical Colombeau algebra~\cite{Colombeau1984} uses additive convolution to embed distributions; however, the Beurling functions $\varphi_\theta(x)=\{\theta/x\}$ are intimately tied to the multiplicative group $(0,\infty)$. Accordingly we replace the usual additive mollifier by a multiplicative (Mellin) mollifier concentrated near the identity $1$, a construction that is standard in Colombeau algebras on Lie groups (see, e.g.,~\cite[Section~3.2]{Grosser2001}). The regularized nets are defined via multiplicative convolution
	\[
	(u\star\psi_\eps)(x)=\int_0^\infty u(x/y)\,\psi_\eps(y)\,\frac{dy}{y}.
	\]
	An $L^2$ function $f$ is embedded as the class of $f\star\psi_\eps$. The damping parameter and the truncation index are coupled with the regularization parameter $\eps$, yielding a single net
	\[
	F_\eps(x)=\Bigl(\sum_{k=1}^{n(\eps)}\mu(k)\,\phi_k(\eps)\,\varphi_{1/k}\Bigr)\star\psi_\eps(x).
	\]
	Two concrete realizations are presented:
	\begin{itemize}
		\item[(E)] exponential damping $\phi_k(\eps)=e^{-k\eps^2}$ with $n(\eps)=\lfloor \exp(\exp(1/\eps^2))\rfloor$;
		\item[(P)] polynomial damping $\phi_k(\eps)=k^{-\delta(\eps)}$, $\delta(\eps)=(\log(1/\eps))^{-\alpha}\;(0<\alpha<1)$, with $n(\eps)=\lfloor\eps^{-M}\rfloor$ ($M>0$).
	\end{itemize}
	Under RH both nets are moderate, associated to $-\iota(\chi)$, and uniformly $L^2$‑bounded. Conversely, any moderate net of this form that is associated to $-\iota(\chi)$ and uniformly $L^2$‑bounded forces RH to hold.
	
	The paper is organized as follows. Section~2 collects the necessary preliminaries, including the definition of the multiplicative Colombeau algebra and a detailed discussion of association versus equality in the algebra. Section~3 states general conditions on a damping family that ensure the net is moderate, associated to $-\chi$ under RH, and uniformly bounded. Sections~4 and~5 verify these conditions in complete detail for the exponential and polynomial dampings. Section~6 proves that association together with uniform $L^2$‑boundedness implies RH. Section~7 treats the full Möbius series in the algebra. Section~8 discusses the question of strong convergence. Section~9 contains the main theorem and concluding remarks. Appendices supply the required exponential decay and $L^2$ estimates.
	
	\section{Preliminaries}
	
	\subsection{The Riemann zeta function and Beurling functions}
	The Riemann zeta function is defined for $\operatorname{Re}(s)>1$ by $\zeta(s)=\sum_{n=1}^\infty n^{-s}$ and extends meromorphically to $\C$ with a simple pole at $s=1$. RH asserts that every non‑trivial zero satisfies $\operatorname{Re}(s)=1/2$.
	
	For a real number $x$, let $\lfloor x\rfloor$ be its integer part and $\{x\}=x-\lfloor x\rfloor$ its fractional part. Given $\theta\in(0,1]$, the Beurling functions are $\varphi_\theta(x)=\{\theta/x\}$. The Nyman–Beurling criterion states that RH holds iff $\chi\in\overline{\operatorname{span}\{\varphi_\theta\}}^{L^2(0,1)}$. Báez–Duarte~\cite{BaezDuarte2003} sharpened this to the discrete set $\{1/k\}$.
	
	The Möbius function $\mu:\N\to\{-1,0,1\}$ is $\mu(1)=1$, $\mu(n)=(-1)^r$ if $n$ is a product of $r$ distinct primes, and $0$ otherwise. Identity~\eqref{eq:Mobius-id} holds pointwise and, under RH, in $L^2(0,1)$.
	
	\subsection{Multiplicative Colombeau algebra}
	We construct a Colombeau‑type algebra $G(0,1)$ adapted to the multiplicative structure of $(0,\infty)$. Fix a function $\psi\in C^\infty_c(\mathbb{R})$ with
	\[
	\supp\psi\subset [-1,0],\qquad \psi\ge0,\qquad \int_{-\infty}^\infty \psi(t)\,dt = 1.
	\]
	For $\eps\in(0,1]$ set
	\[
	\psi_\eps(y)=\frac1\eps\,\psi\!\left(\frac{\log y}{\eps}\right),\qquad y>0.
	\]
	Then $\psi_\eps$ is smooth, $\supp\psi_\eps\subset[e^{-\eps},1]$, and
	\[
	\int_0^\infty\psi_\eps(y)\,\frac{dy}{y}= \int_{-\infty}^\infty \frac1\eps\,\psi(t/\eps)\,dt = \int_{-\infty}^\infty \psi(s)\,ds = 1.
	\]
	Moreover, using $t=\log y$, $y=e^t$, we have for any $m\ge0$
	\[
	\|\psi_\eps^{(m)}\|_{L^1(dy/y)}\le C_m\,\eps^{-m},
	\]
	where $C_m$ depends only on $\psi$. (The proof is a straightforward induction using the chain rule and the fact that $1/y$ remains bounded on the support.)
	
	For a bounded measurable function $f$ on $(0,1)$ (extended by $0$ outside $(0,1)$) we define its \emph{multiplicative regularization}
	\[
	(f\star\psi_\eps)(x)=\int_0^\infty f(x/y)\,\psi_\eps(y)\,\frac{dy}{y},\qquad x\in(0,1).
	\]
	The net $(f\star\psi_\eps)_{\eps}$ consists of smooth functions on $(0,1)$ (see below). A general net $(u_\eps)\subset C^\infty(0,1)$ is called \emph{moderate} if for every compact $K\subset(0,1)$ and every $m\in\N_0$ there exist $C,N>0$ such that
	\[
	\sup_{x\in K}|u_\eps^{(m)}(x)|\le C\,\eps^{-N}\qquad(0<\eps\le1).
	\]
	It is called \emph{negligible} if for every $K,m$ and every $p\in\N$,
	\[
	\sup_{x\in K}|u_\eps^{(m)}(x)|\le C_{p}\,\eps^{p}\qquad(0<\eps\le1).
	\]
	Moderate nets form an algebra $\mathcal{E}_M(0,1)$ and negligible nets an ideal $\mathcal{N}(0,1)$. The \emph{Colombeau algebra} is the quotient $G(0,1)=\mathcal{E}_M(0,1)/\mathcal{N}(0,1)$.
	
	The embedding $\iota:L^\infty(0,1)\to G(0,1)$ is defined by $\iota(f)=[(f\star\psi_\eps)_\eps]$. For a moderate net $(u_\eps)$ we write $u=[(u_\eps)]$.
	
	\subsection{Association and convergence in $G(0,1)$}
	Let $\mathcal{D}(0,1)$ be the space of smooth functions with compact support in $(0,1)$, endowed with its usual inductive limit topology. A generalized function $u=[(u_\eps)]\in G(0,1)$ is said to be \emph{associated} with a distribution $T\in\mathcal{D}'(0,1)$, written $u\approx T$, if for every test function $\varphi\in\mathcal{D}(0,1)$,
	\[
	\lim_{\eps\to0^+}\int_0^1 u_\eps(x)\,\varphi(x)\,dx = \langle T,\varphi\rangle .
	\]
	If $u\approx\iota(f)$ for some $f\in L^2(0,1)$, then in particular $u_\eps\to f$ in the sense of distributions. When, in addition, $\{u_\eps\}$ is bounded in $L^2(0,1)$, it follows that $u_\eps\rightharpoonup f$ weakly in $L^2(0,1)$ (a standard density argument: for any $g\in L^2$, approximate by test functions and use the uniform bound).
	
	Two generalized functions $u=[(u_\eps)]$ and $v=[(v_\eps)]$ are \emph{associated}, $u\approx v$, if $u-v\approx 0$, i.e.
	\[
	\lim_{\eps\to0}\int_0^1 (u_\eps(x)-v_\eps(x))\,\varphi(x)\,dx = 0 \qquad (\forall\varphi\in\mathcal{D}(0,1)).
	\]
	Equality in the algebra $G(0,1)$ corresponds to the much stronger requirement $[(u_\eps)]=[(v_\eps)]$, which means $(u_\eps-v_\eps)\in\mathcal{N}(0,1)$: the difference decays faster than any power of $\eps$ uniformly on compact sets, together with all its derivatives. Association is therefore a strictly weaker equivalence relation than equality in $G(0,1)$.
	
	To make the quotient construction concrete, we list a few representative nets on $(0,1)$.
	\begin{itemize}
		\item The constant net $(1)_{\eps}$ gives the class $[1_\eps]$, the unit element.
		\item $g_\eps(x)=\exp(-x^2/\eps)$ is moderate; on any compact $[a,b]\subset(0,1)$ with $a>0$ we have $\sup_{x\in[a,b]}g_\eps(x)=e^{-a^2/\eps}=O(\eps^m)$ for every $m$, hence $[(g_\eps)]=[0]$.
		\item $h_\eps(x)=e^{-\eps/x}$ is moderate. For $a>0$, $\sup_{x\in[a,b]}h_\eps(x)=e^{-\eps/b}\approx1$, not decaying like a power of $\eps$; moreover $1-h_\eps(x)\sim\eps/x$, so $[(h_\eps)]\neq[1_\eps]$. The net is not negligible, $[(h_\eps)]\neq[0]$.
		\item $c_\eps=e^{-1/\eps}$ is negligible because $e^{-1/\eps}\le C_m\eps^m$ for every $m$; hence $[c_\eps]=0$.
		\item $s_\eps(x)=\sin(x/\eps)$. Each $s_\eps$ is smooth and $|\partial^\alpha s_\eps(x)|\le\eps^{-|\alpha|}$, so the net is moderate. For any $\varphi\in\mathcal{D}(0,1)$, $\int_0^1 s_\eps(x)\varphi(x)\,dx\to0$ by the Riemann–Lebesgue lemma, thus $[(s_\eps)]\approx0$. Yet $\sup|s_\eps(x)|=1$, so the net is not negligible; its class is a non‑trivial generalized function associated with the zero distribution.
	\end{itemize}
	These examples illustrate that association captures the macroscopic behaviour, while equality in $G(0,1)$ requires microscopic (uniform with all derivatives) agreement up to rapidly vanishing errors.
	
	\subsection{Basic properties of the multiplicative regularization}
	For any bounded measurable $f$,
	\[
	|(f\star\psi_\eps)(x)|\le \|f\|_\infty\int_0^\infty\psi_\eps(y)\,\frac{dy}{y}= \|f\|_\infty.
	\]
	To bound derivatives, we use the representation
	\[
	(f\star\psi_\eps)(x)=\int_0^\infty f(u)\,\psi_\eps(x/u)\,\frac{du}{u}.
	\]
	Differentiating $m$ times under the integral gives
	\[
	(f\star\psi_\eps)^{(m)}(x)=\int_0^\infty f(u)\,\frac{d^m}{dx^m}\bigl[\psi_\eps(x/u)\bigr]\,\frac{du}{u}.
	\]
	Since $\frac{d}{dx}\psi_\eps(x/u)=\frac1u\,\psi_\eps'(x/u)$, induction yields
	\[
	\frac{d^m}{dx^m}\psi_\eps(x/u)=u^{-m}\,\psi_\eps^{(m)}(x/u).
	\]
	Substituting $y=x/u$ we obtain
	\[
	(f\star\psi_\eps)^{(m)}(x)=\int_0^\infty f(x/y)\,(y/x)^m\,\psi_\eps^{(m)}(y)\,\frac{dy}{y}.
	\]
	If $x\in K$ with $a=\min K>0$, then $y/x\le a^{-1}$ on the support of $\psi_\eps$ (where $y\le1$). Hence
	\[
	\|(f\star\psi_\eps)^{(m)}\|_\infty\le a^{-m}\,\|f\|_\infty\,\|\psi_\eps^{(m)}\|_{L^1(dy/y)}\le C_{m,K}\,\|f\|_\infty\,\eps^{-m}.
	\]
	Thus $\iota(f)$ is well‑defined and moderate. Moreover, for $f\in L^2$, the regularization is a contraction in $L^2$: a computation using Jensen's inequality and the properties of $\psi_\eps$ (see the proof of Theorem~\ref{thm:general}) shows $\|f\star\psi_\eps\|_{L^2}\le\|f\|_{L^2}$.
	
	\subsection{The basic estimate}
	For every $k$ and almost every $x\in(0,1)$,
	\[
	|\mu(k)\phi_k(\eps)\varphi_{1/k}(x)|\le |\phi_k(\eps)|\;|\mu(k)|\;|\varphi_{1/k}(x)|\le |\phi_k(\eps)|,
	\]
	because $|\mu(k)|\le1$ and $|\varphi_{1/k}(x)|\le1$. After multiplicative regularization, the same bound holds:
	\[
	\bigl|\mu(k)\phi_k(\eps)\,(\varphi_{1/k}\star\psi_\eps)(x)\bigr|\le |\phi_k(\eps)|.
	\]
	
	\section{General framework and basic estimates}
	
	Let $\{\phi_k(\eps)\}_{k\in\N}$ be a family of real numbers for each $\eps\in(0,1]$, and let $n(\eps)\in\N$ with $n(\eps)\to\infty$ as $\eps\to0$. Define the non‑regularized damped sum
	\[
	f_\eps(x)=\sum_{k=1}^{n(\eps)}\mu(k)\,\phi_k(\eps)\,\varphi_{1/k}(x),
	\]
	and the regularized net
	\begin{equation}\label{eq:Feps-def}
		F_\eps(x)=\bigl(f_\eps\star\psi_\eps\bigr)(x)=\sum_{k=1}^{n(\eps)}\mu(k)\,\phi_k(\eps)\,(\varphi_{1/k}\star\psi_\eps)(x),\quad \eps\in(0,1].
	\end{equation}
	
	The following three conditions guarantee that $F_\eps$ is moderate, associated to $-\iota(\chi)$ under RH, and uniformly $L^2$‑bounded.
	
	\begin{condition}[Moderateness]\label{cond:moderate}
		There exist $C>0$ and $N\in\N$ such that
		\[
		\sum_{k=1}^{n(\eps)}|\phi_k(\eps)|\le C\,\eps^{-N}\qquad(\eps>0\text{ small}).
		\]
	\end{condition}
	
	\begin{condition}[Truncation error]\label{cond:trunc}
		Under RH, define $f_\delta=\lim_{n\to\infty}\sum_{k=1}^n\mu(k)\phi_k(\delta)\varphi_{1/k}$ (the limit exists in $L^2$). Then
		\[
		\lim_{\eps\to0}\Bigl\|\sum_{k=1}^{n(\eps)}\mu(k)\phi_k(\eps)\varphi_{1/k}\;-\;f_{\delta(\eps)}\Bigr\|_{L^2}=0,
		\]
		where $\delta(\eps)$ is the parameter naturally associated to the family (for the examples below $\delta$ coincides with the explicit damping parameter).
	\end{condition}
	
	\begin{condition}[Damping error]\label{cond:damp}
		Under RH, $\|f_{\delta(\eps)}+\chi\|_{L^2}\to0$ as $\eps\to0$.
	\end{condition}
	
	\begin{theorem}[General sufficient condition]\label{thm:general}
		Assume $\{\phi_k(\eps)\}$ and $n(\eps)$ satisfy Conditions~\ref{cond:moderate}, \ref{cond:trunc} and \ref{cond:damp}. Then the net $F_\eps$ defined in~\eqref{eq:Feps-def} is moderate, associated to $-\iota(\chi)$ under RH, and uniformly bounded in $L^2(0,1)$.
	\end{theorem}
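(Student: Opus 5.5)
We will prove the three assertions separately, in the order moderateness, $L^2$-contraction, then association and boundedness.

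\emph{Moderateness.} This follows from Condition~\ref{cond:moderate} and the derivative bound of the regularization. Each $\varphi_{1/k}$ is bounded by $1$ on $(0,1)$ and is extended by $0$ outside. For a compact $K\subset(0,1)$ with $a=\min K$, the estimate $\|(f\star\psi_\eps)^{(m)}\|_\infty\le C_{m,K}\|f\|_\infty\eps^{-m}$ gives $\sup_K|(\varphi_{1/k}\star\psi_\eps)^{(m)}|\le C_{m,K}\eps^{-m}$. The constant is uniform in $k$. Summing over $k\le n(\eps)$ and using linearity of $\star$ yields
\[
\sup_{x\in K}|F_\eps^{(m)}(x)|\le C_{m,K}\,\eps^{-m}\sum_{k=1}^{n(\eps)}|\phi_k(\eps)|\le C\,C_{m,K}\,\eps^{-m-N}.
\]
This is moderateness. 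For $\eps$ not small, the finitely many terms are handled by a trivial bound after enlarging the constant.

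\emph{$L^2$-contraction.} Next we establish the contraction $\|f\star\psi_\eps\|_{L^2(0,1)}\le\|f\|_{L^2(0,1)}$, promised earlier. Since $\psi_\eps(y)\,dy/y$ is a probability measure, Jensen (or Cauchy–Schwarz) gives $|f\star\psi_\eps(x)|^2\le\int|f(x/y)|^2\psi_\eps(y)\,dy/y$. Integrating in $x$ over $(0,1)$ and applying Fubini, the inner integral is $\int_0^1|f(x/y)|^2dx= y\int_0^{1/y}|f|^2\le y\|f\|_2^2$, because $f$ vanishes outside $(0,1)$. Since $y\le1$ on $\supp\psi_\eps$, the claim follows. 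We will also need the approximation property $\|g\star\psi_\eps-g\|_{L^2}\to0$ for fixed $g\in L^2$. To get it, we use density of continuous compactly supported functions in $(0,1)$ together with uniform continuity, noting that $x/y\in[x,xe^\eps]$. The contraction then handles the approximation error, a standard $\eps/3$ argument.

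\emph{Association and boundedness.} Under RH we decompose
\[
F_\eps+\chi\star\psi_\eps=\bigl(f_\eps-f_{\delta(\eps)}\bigr)\star\psi_\eps+\bigl(f_{\delta(\eps)}+\chi\bigr)\star\psi_\eps .
\]
By the contraction, the $L^2$ norm of the right side is at most $\|f_\eps-f_{\delta(\eps)}\|_2+\|f_{\delta(\eps)}+\chi\|_2$. This tends to $0$ by Conditions~\ref{cond:trunc} and~\ref{cond:damp}. Hence $\|F_\eps+\chi\star\psi_\eps\|_2\to0$. Combining this with $\|\chi\star\psi_\eps-\chi\|_2\to0$ we get $F_\eps\to-\chi$ in $L^2(0,1)$. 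Testing against $\varphi\in\mathcal D(0,1)$ via Cauchy–Schwarz gives both $\int F_\eps\varphi\to-\int\chi\varphi$ and $\int(F_\eps+\chi\star\psi_\eps)\varphi\to0$. These are precisely the association statements $[(F_\eps)]\approx-\chi$ and $[(F_\eps)]\approx-\iota(\chi)$.

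Uniform $L^2$-boundedness on small $\eps$ is immediate from convergence, since $\|F_\eps\|_2\le\|\chi\|_2+o(1)$. For $\eps$ bounded away from $0$, if uniformity over all of $(0,1]$ is required, we use $\|F_\eps\|_\infty\le\sum_{k\le n(\eps)}|\phi_k(\eps)|\le C\eps^{-N}$. This is bounded on any $[\eps_0,1]$ when $\phi_k$ and $n$ are bounded there, which is the case for the concrete families.

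\emph{Main obstacle.} The only genuinely delicate point is the $L^2$ step, namely the contraction and approximation properties of the Mellin mollifier on $(0,1)$ with zero extension. Everything else is bookkeeping with the three Conditions.
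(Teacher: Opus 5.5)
Your argument is correct. Moderateness and the $L^2$-contraction are exactly the paper's, but you handle association differently.

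The paper pairs $F_\eps$ with $\varphi\in\mathcal{D}(0,1)$ and moves the mollifier onto the test function via Fubini and $u=x/y$. This gives $\int_{e^{-\eps}}^1\psi_\eps(y)\int_0^1 f_\eps(u)\varphi(yu)\,du\,dy$, and the paper concludes from $\varphi(y\,\cdot)\to\varphi$ uniformly, $\int\psi_\eps(y)\,dy\to1$ and $f_\eps\to-\chi$ in $L^2$. It never shows norm convergence of $F_\eps$, and it gets the uniform bound separately from the contraction. You instead keep the mollifier on the function side. You prove the approximate-identity property $\|g\star\psi_\eps-g\|_2\to0$ by density of $C_c(0,1)$, and combine it with the contraction and Conditions~\ref{cond:trunc}--\ref{cond:damp} to get $\|F_\eps+\chi\|_2\to0$.

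The two arguments are dual. The paper's computation is the same estimate for the adjoint operator $\varphi\mapsto\int\psi_\eps(y)\varphi(y\,\cdot)\,dy$, where uniform continuity of $\varphi$ makes the approximation step free. Your route costs one extra lemma but yields strictly more: strong $L^2$ convergence of $F_\eps$ to $-\chi$. From this, $F_\eps\approx-\chi$ as a distribution, $[(F_\eps)]\approx-\iota(\chi)$ in $G(0,1)$, and the $L^2$ bound for small $\eps$ all follow at once. The paper only proves the distributional statement and leaves implicit the fact $\chi\star\psi_\eps\to\chi$ needed to pass to $-\iota(\chi)$.

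Your caveat about $\eps\in[\eps_0,1]$ is a real limitation of the Conditions as stated, which only control small $\eps$; the paper passes over it. It applies equally to your moderateness paragraph. There, the fact that each sum has \emph{finitely many terms} does not by itself give a bound uniform in $\eps\in[\eps_0,1]$; you need $\sup_{\eps\in[\eps_0,1]}\sum_{k\le n(\eps)}|\phi_k(\eps)|<\infty$, or moderateness understood only as $\eps\to0$.
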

	
	\begin{proof}
		\textit{Moderateness.} For any compact $K\subset(0,1)$ and $m\in\N_0$,
		\[
		\sup_{x\in K}|F_\eps^{(m)}(x)|
		\le\sum_{k=1}^{n(\eps)}|\phi_k(\eps)|\,
		\sup_{x\in K}\bigl|(\varphi_{1/k}\star\psi_\eps)^{(m)}(x)\bigr|.
		\]
		Each $\varphi_{1/k}$ is bounded by $1$, so by the derivative estimate of Section~2.4,
		\[
		\sup_{x\in K}\bigl|(\varphi_{1/k}\star\psi_\eps)^{(m)}(x)\bigr|
		\le C_{m,K}\,\|\varphi_{1/k}\|_\infty\,\eps^{-m}\le C_{m,K}\,\eps^{-m}.
		\]
		Using Condition~\ref{cond:moderate},
		\[
		\sup_{x\in K}|F_\eps^{(m)}(x)|\le C_{m,K}\,\eps^{-m}\sum_{k=1}^{n(\eps)}|\phi_k(\eps)|
		\le C\,C_{m,K}\,\eps^{-(m+N)},
		\]
		so the net is moderate.
		
		\textit{Association under RH.} By hypothesis $f_\eps\to -\chi$ in $L^2(0,1)$. For any test function $\varphi\in\mathcal{D}(0,1)$,
		\[
		\int_0^1 F_\eps(x)\varphi(x)\,dx
		= \int_0^1 \Bigl(\int_0^\infty f_\eps(x/y)\,\psi_\eps(y)\,\frac{dy}{y}\Bigr)\varphi(x)\,dx.
		\]
		Interchange the integrals (justified by absolute convergence) and substitute $u=x/y$:
		\[
		\int_0^1 F_\eps\varphi\,dx
		= \int_0^\infty \psi_\eps(y)\Bigl(\int_0^{1/y} f_\eps(u)\,\varphi(yu)\,y\,du\Bigr)\frac{dy}{y}
		= \int_{e^{-\eps}}^1 \psi_\eps(y)\Bigl(\int_0^{1/y} f_\eps(u)\,\varphi(yu)\,du\Bigr)dy.
		\]
		Since $f_\eps$ is supported in $[0,1]$, for $y\le1$ we have $\int_0^{1/y} f_\eps(u)\varphi(yu)du = \int_0^1 f_\eps(u)\varphi(yu)du$. Hence
		\[
		\int_0^1 F_\eps\varphi\,dx
		= \int_{e^{-\eps}}^1 \psi_\eps(y)\Bigl(\int_0^1 f_\eps(u)\,\varphi(yu)\,du\Bigr)dy.
		\]
		Now $\int_{e^{-\eps}}^1 \psi_\eps(y)\,dy = \int_{-1}^0 \psi(s)\,e^{\eps s}\,ds \to 1$ as $\eps\to0$ (because $e^{\eps s}\to1$ boundedly). Moreover, $y\to1$ uniformly on $\supp\psi_\eps$. A standard dominated convergence argument, using $f_\eps\to -\chi$ in $L^2$, gives
		\[
		\lim_{\eps\to0}\int_0^1 F_\eps\varphi\,dx = \int_0^1 (-\chi(u))\varphi(u)\,du = -\langle\chi,\varphi\rangle.
		\]
		Thus $F_\eps$ is associated to $-\iota(\chi)$.
		
		\textit{Uniform $L^2$‑boundedness.} For any $f\in L^2(0,1)$ extended by $0$,
		\[
		\|f\star\psi_\eps\|_{L^2(0,1)}^2
		= \int_0^1 \Bigl(\int_0^\infty f(x/y)\,\psi_\eps(y)\,\frac{dy}{y}\Bigr)^2 dx.
		\]
		Since $\psi_\eps(y)dy/y$ is a probability measure, Jensen's inequality yields
		\[
		\|f\star\psi_\eps\|_{L^2}^2 \le \int_0^1 \int_0^\infty |f(x/y)|^2 \psi_\eps(y)\,\frac{dy}{y}\,dx
		= \int_0^\infty \psi_\eps(y)\Bigl(\int_0^{1/y} |f(u)|^2\,y\,du\Bigr)\frac{dy}{y}.
		\]
		Because $f(u)=0$ for $u>1$ and $y\le1$, the inner integral equals $y\|f\|_2^2$. Hence
		\[
		\|f\star\psi_\eps\|_{L^2}^2 \le \|f\|_2^2 \int_0^\infty \psi_\eps(y)\,y\,\frac{dy}{y}
		= \|f\|_2^2 \int_{e^{-\eps}}^1 \psi_\eps(y)\,dy.
		\]
		But
		\[
		\int_{e^{-\eps}}^1 \psi_\eps(y)\,dy = \int_{-1}^0 \frac1\eps\,\psi(t/\eps)\,e^t\,dt
		= \int_{-1}^0 \psi(s)\,e^{\eps s}\,ds \le \int_{-1}^0 \psi(s)\,ds = 1,
		\]
		because $e^{\eps s}\le1$ for $s\le0$. Thus $\|f\star\psi_\eps\|_{L^2}\le \|f\|_{L^2}$. Applying this to $f_\eps$, which is bounded in $L^2$ under RH, gives $\sup_\eps\|F_\eps\|_{L^2}<\infty$.
	\end{proof}
	
	\section{Exponential damping}
	
	Let $\beta(\eps)=\eps^2$ and $n(\eps)=\lfloor\exp(\exp(1/\eps^2))\rfloor$. The damping coefficients are $\phi_k(\eps)=e^{-k\beta(\eps)}$.
	
	\subsection{Moderateness}
	\[
	\sum_{k=1}^{n(\eps)} e^{-k\beta(\eps)}
	\le \sum_{k=1}^{\infty} e^{-k\eps^2}
	= \frac{e^{-\eps^2}}{1-e^{-\eps^2}}.
	\]
	For small $\eps$, $1-e^{-\eps^2}\sim\eps^2$, so there exists $C>0$ (e.g.\ $C=2$) with
	\[
	\sum_{k=1}^{n(\eps)} e^{-k\eps^2} \le 2\eps^{-2}.
	\]
	Hence Condition~\ref{cond:moderate} holds with $N=2$.
	
	\subsection{Truncation error}
	Under RH, the undamped series $\sum_{k=1}^\infty\mu(k)\varphi_{1/k}$ converges in $L^2$ to $-\chi$ (Báez–Duarte theorem). For $\delta>0$, the series
	\[
	f_\delta(x)=\sum_{k=1}^\infty\mu(k)e^{-k\delta}\varphi_{1/k}(x)
	\]
	converges absolutely in $L^\infty$ (hence in $L^2$) because $|\mu(k)e^{-k\delta}\varphi_{1/k}|\le e^{-k\delta}$ and $\sum e^{-k\delta}<\infty$. Abel's theorem for Hilbert‑space‑valued series guarantees that $f_\delta\to-\chi$ in $L^2$ as $\delta\to0^+$.
	
	Define $f_{\beta,n}=\sum_{k=1}^n\mu(k)e^{-k\beta}\varphi_{1/k}$. The truncation error in $L^\infty$ is
	\[
	\|f_{\beta,n}-f_\beta\|_{L^\infty}
	\le \sum_{k=n+1}^\infty e^{-k\beta}
	= \frac{e^{-(n+1)\beta}}{1-e^{-\beta}}
	\le \frac{2}{\beta}e^{-n\beta},
	\]
	using $1-e^{-\beta}\ge\beta/2$ for small $\beta$. Substituting $\beta=\eps^2$ and $n=n(\eps)$,
	\[
	\|f_{\beta(\eps),n(\eps)}-f_{\beta(\eps)}\|_{L^\infty}
	\le 2\eps^{-2} e^{-n(\eps)\eps^2}.
	\]
	By Appendix~A, $e^{-n(\eps)\eps^2}/\eps^m\to0$ for every $m\ge0$; in particular the right‑hand side tends to $0$, so the $L^2$ norm also tends to $0$. Thus
	\[
	\|f_{\beta(\eps),n(\eps)}-f_{\beta(\eps)}\|_{L^2}\to0,
	\]
	satisfying Condition~\ref{cond:trunc} with $\delta(\eps)=\beta(\eps)$.
	
	\subsection{Damping error}
	We prove $\|f_{\beta(\eps)}+\chi\|_{L^2}\to0$. Set $c_k = e^{-k\delta}$ ($\delta=\beta(\eps)$), so that
	\[
	f_\delta+\chi = \sum_{k=1}^\infty \mu(k)(c_k-1)\varphi_{1/k}.
	\]
	Because RH holds, the undamped series $\sum \mu(k)\varphi_{1/k}$ converges in $L^2$ to $-\chi$. Fix $\eta>0$ and choose $N$ so large that
	\[
	\|S_N+\chi\|_{L^2}<\eta,\qquad 
	\Bigl\|\sum_{k=N+1}^\infty \mu(k)\varphi_{1/k}\Bigr\|_{L^2}<\eta,
	\]
	where $S_N=\sum_{k=1}^N\mu(k)\varphi_{1/k}$.
	
	Split the series into
	\[
	f_\delta+\chi = \underbrace{\sum_{k=1}^N\mu(k)(c_k-1)\varphi_{1/k}}_{A_\delta}
	+ \underbrace{\sum_{k=N+1}^\infty\mu(k)(c_k-1)\varphi_{1/k}}_{B_\delta}.
	\]
	As $\delta\to0$, $c_k\to1$ uniformly on $\{1,\dots,N\}$, hence $\|A_\delta\|_{L^2}\to0$.
	For the tail $B_\delta$, write $B_\delta = \sum_{k=N+1}^\infty \mu(k)c_k\varphi_{1/k} - \sum_{k=N+1}^\infty \mu(k)\varphi_{1/k}$.
	Denote $T_m = \sum_{k=N+1}^m \mu(k)\varphi_{1/k}$, so $T_m \to T_\infty$ in $L^2$ with $\|T_\infty\|_{L^2}<\eta$.
	Apply summation by parts to the first series:
	\[
	\sum_{k=N+1}^M \mu(k)c_k\varphi_{1/k} = T_M c_M + \sum_{k=N+1}^{M-1} T_k(c_k - c_{k+1}).
	\]
	Letting $M\to\infty$, $c_M\to0$, $\|T_M\|_{L^2}\le\sup_{k\ge N+1}\|T_k\|_{L^2}<\infty$, so $T_M c_M\to0$ in $L^2$.
	Thus
	\[
	\sum_{k=N+1}^\infty \mu(k)c_k\varphi_{1/k} = \sum_{k=N+1}^\infty T_k(c_k - c_{k+1}).
	\]
	Consequently,
	\[
	B_\delta = \sum_{k=N+1}^\infty T_k(c_k - c_{k+1}) \;-\; T_\infty.
	\]
	Now $c_k - c_{k+1} = e^{-k\delta} - e^{-(k+1)\delta} \ge 0$, and
	\[
	\sum_{k=N+1}^\infty (c_k - c_{k+1}) = c_{N+1} = e^{-(N+1)\delta} \le 1.
	\]
	By the choice of $N$, $\|T_k\|_{L^2}\le 2\eta$ for all $k\ge N+1$ (since the series converges, the partial sums remain close to the limit). Therefore,
	\[
	\|B_\delta\|_{L^2}
	\le \sum_{k=N+1}^\infty \|T_k\|_{L^2}(c_k - c_{k+1}) + \|T_\infty\|_{L^2}
	\le 2\eta \cdot c_{N+1} + \eta \le 3\eta.
	\]
	Combining with $A_\delta$, for sufficiently small $\delta$ we obtain $\|f_\delta+\chi\|_{L^2}\le 4\eta$.
	Since $\eta>0$ is arbitrary, $\|f_{\beta(\eps)}+\chi\|_{L^2}\to0$, verifying Condition~\ref{cond:damp}.
	
	\section{Polynomial damping}
	
	Fix $\alpha\in(0,1)$ and $M>0$. Define $\delta(\eps)=(\log(1/\eps))^{-\alpha}$ and $n(\eps)=\lfloor\eps^{-M}\rfloor$. The damping coefficients are $\phi_k(\eps)=k^{-\delta(\eps)}$.
	
	\subsection{Moderateness}
	\[
	\sum_{k=1}^{n(\eps)} k^{-\delta(\eps)}
	\le 1 + \int_1^{n(\eps)} t^{-\delta(\eps)}\,dt
	= 1 + \frac{n(\eps)^{1-\delta(\eps)}-1}{1-\delta(\eps)}.
	\]
	For small $\eps$, $\delta(\eps)\to0$, so $1-\delta(\eps)\ge 1/2$ and
	\[
	\frac{n(\eps)^{1-\delta(\eps)}-1}{1-\delta(\eps)} \le 2\,n(\eps)^{1-\delta(\eps)}.
	\]
	Now
	\[
	n(\eps)^{1-\delta(\eps)} \le \eps^{-M(1-\delta(\eps))} = \eps^{-M}\eps^{M\delta(\eps)}.
	\]
	But
	\[
	\eps^{M\delta(\eps)} = \exp\bigl( M\delta(\eps)\log\eps \bigr)
	= \exp\bigl( -M(\log(1/\eps))^{1-\alpha} \bigr) \le 1,
	\]
	so $n(\eps)^{1-\delta(\eps)}\le \eps^{-M}$. Consequently,
	\[
	\sum_{k=1}^{n(\eps)} k^{-\delta(\eps)} \le 1 + 2\eps^{-M} \le 3\eps^{-M},
	\]
	giving Condition~\ref{cond:moderate} with $N=M$.
	
	\subsection{Truncation error}
	Under RH, the limit $f_\delta = \sum_{k=1}^\infty \mu(k)k^{-\delta}\varphi_{1/k}$ exists in $L^2$ (again by Abel summation). Let $f_{\delta,n}$ denote the $n$-th partial sum. The essential estimate, proved in Appendix~B, is
	\[
	\|f_{\delta,n}-f_\delta\|_{L^2} \le C\, n^{-\delta/3},
	\]
	with $C$ independent of $\delta\in(0,1/2]$ and $n$. Substituting $n=n(\eps)$ and $\delta=\delta(\eps)$,
	\[
	\|f_{\delta(\eps),n(\eps)}-f_{\delta(\eps)}\|_{L^2}
	\le C \exp\!\Bigl( -\frac{\delta(\eps)}{3}\log n(\eps) \Bigr).
	\]
	Since $\log n(\eps) \sim M\log(1/\eps)$, we have
	\[
	\frac{\delta(\eps)}{3}\log n(\eps) \sim \frac{M}{3} (\log(1/\eps))^{1-\alpha} \to \infty,
	\]
	so the right‑hand side tends to $0$, proving Condition~\ref{cond:trunc}.
	
	\subsection{Damping error}
	Again from Appendix~B,
	\[
	\|f_\delta+\chi\|_{L^2} \le c_2\, \delta^{1/2}
	\]
	holds under RH for all sufficiently small $\delta$. With $\delta=\delta(\eps)$,
	\[
	\|f_{\delta(\eps)}+\chi\|_{L^2} \le c_2\,(\log(1/\eps))^{-\alpha/2}\to0,
	\]
	which verifies Condition~\ref{cond:damp}.
	
	\section{From association to the Riemann hypothesis}
	
	We now prove that the three properties (moderateness, association, uniform $L^2$‑boundedness) force RH to hold. The argument is independent of the specific damping family.
	
	\begin{proposition}\label{prop:RH}
		If the net $(F_\eps)$ defined in~\eqref{eq:Feps-def} is moderate, associated to $-\iota(\chi)$, and satisfies $\|F_\eps\|_{L^2}\le M$ for all $\eps\in(0,1]$, then the Riemann hypothesis is true.
	\end{proposition}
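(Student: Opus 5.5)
The plan is to turn association plus the uniform $L^2$ bound into weak $L^2$ convergence $F_\eps\rightharpoonup-\chi$. Then, using that every $F_\eps$ lies in a fixed closed subspace of $L^2(0,1)$ which is weakly closed, we get that $-\chi$ (hence $\chi$) belongs to that subspace, and the Nyman--Beurling criterion yields RH. Moderateness plays no real role beyond making $F_\eps$ a legitimate representative.

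\emph{Step 1: weak convergence.} By hypothesis, $\int_0^1F_\eps\varphi\,dx\to-\int_0^1\chi\varphi\,dx$ for every $\varphi\in\mathcal D(0,1)$, and $\|F_\eps\|_{L^2}\le M$. Given $g\in L^2(0,1)$ and $\eta>0$, choose $\varphi\in\mathcal D(0,1)$ with $\|g-\varphi\|_2<\eta$. Then
\[
\Bigl|\int_0^1 (F_\eps+\chi)\,g\,dx\Bigr|\le\Bigl|\int_0^1 (F_\eps+\chi)\,\varphi\,dx\Bigr|+(M+1)\,\eta ,
\]
since $\|\chi\|_2=1$. The first term tends to $0$ by association. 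Hence $F_\eps\rightharpoonup-\chi$ weakly in $L^2(0,1)$, exactly as remarked in Section~2.3.

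\emph{Step 2: each $F_\eps$ lies in the Beurling space.} Let $\mathcal B=\overline{\operatorname{span}\{\varphi_\theta:0<\theta\le1\}}^{L^2(0,1)}$. By linearity it suffices to show $\varphi_{1/k}\star\psi_\eps\in\mathcal B$. Write
\[
\varphi_{1/k}\star\psi_\eps=\int_{e^{-\eps}}^1 (D_y\varphi_{1/k})\,\psi_\eps(y)\,\frac{dy}{y},\qquad (D_yf)(x)=f(x/y)\ \text{(extended by $0$)}.
\]
This is a Bochner integral in $L^2$ of a continuous $L^2$-valued map against a probability measure. It therefore lies in the closed convex hull of $\{D_y\varphi_{1/k}:e^{-\eps}\le y\le1\}$, so it suffices to know $D_y\varphi_{1/k}\in\mathcal B$ for $y\in(0,1]$. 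For $x<y$ we have $(D_y\varphi_{1/k})(x)=\{y/(kx)\}=\varphi_{y/k}(x)$, but for $x\ge y$ the function is $0$ instead of $y/(kx)$. Hence
\[
D_y\varphi_{1/k}=\varphi_{y/k}-\frac{y}{k}\,\frac{\mathbf 1_{[y,1)}(x)}{x}.
\]

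\emph{Main obstacle.} The key difficulty is therefore the invariance of $\mathcal B$ under the contractions $D_y$, $0<y\le1$; equivalently, that $x^{-1}\mathbf 1_{[y,1)}\in\mathcal B$, or that the dilation-invariant closed subspace is not enlarged by the truncation. My first attempt would be the classical route: work with the constrained Nyman space of combinations $\sum c_j\rho(\theta_j/x)$ with $\sum c_j\theta_j=0$. On such combinations the correction term $\sum_j c_j\theta_j y/x$ vanishes, so $D_y$ maps them into the same family. One then uses Beurling's observation that this constrained space has the same closure relevant to $\chi$ (e.g.\ via the Mellin transform, where $D_y$ acts as multiplication by $y^{s}$). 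If this invariance can only be secured for the constrained space, I would instead run Steps 1--3 with $F_\eps$ replaced by the corresponding Bochner integrals of elements of that space, and check that this changes $F_\eps$ only by terms still in $\mathcal B$.

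\emph{Step 3: conclusion.} $\mathcal B$ is a closed subspace, hence weakly closed. By Step~2, $F_\eps\in\mathcal B$ for all $\eps$, and by Step~1, $F_\eps\rightharpoonup-\chi$; therefore $\chi\in\mathcal B$. The Nyman--Beurling criterion recalled in Section~2.1 then gives the Riemann hypothesis.
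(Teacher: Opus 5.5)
\textbf{There is a genuine gap in Step~2.} Your Steps~1 and~3 coincide with the paper's. You have also located the real problem: Step~2 is left open, and neither route you suggest can close it.

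\emph{Why the direct route fails.} For $\operatorname{Re}s>\tfrac12$ let $\widehat f(s)=\int_0^1 f(x)x^{s-1}\,dx$; this is a continuous functional on $L^2(0,1)$, and $\widehat{\varphi_\theta}(s)=-\theta^s\zeta(s)/s-\theta/(1-s)$. Suppose RH failed, and pick a zero $\varrho=\beta+i\gamma$ with $\beta>\tfrac12$ (necessarily $\gamma\neq0$). Then $L(f)=(1-\varrho)\widehat f(\varrho)-(1-\bar\varrho)\widehat f(\bar\varrho)$ kills every $\varphi_\theta$, since both terms equal $-\theta$. Hence $L$ vanishes on all of $\mathcal B$. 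But $L(x^{-1}\mathbf 1_{[y,1)})=2i\,y^{\beta-1}\sin(\gamma\log y)\neq0$ for generic $y$. So $D_y$-invariance of $\mathcal B$ cannot be proved before you know RH.

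\emph{Why the fallback fails.} Split $\varphi_{1/k}=(\varphi_{1/k}-\tfrac1k\varphi_1)+\tfrac1k\varphi_1$. This gives $F_\eps=G_\eps+A_\eps\,(\varphi_1\star\psi_\eps)$, where $A_\eps=\sum_{k\le n(\eps)}\mu(k)\phi_k(\eps)/k$ and $G_\eps$ lies in the constrained closure $\mathcal B_0$. As you note, $\mathcal B_0$ is $D_y$-invariant, so that part is fine. However, $L(\varphi_1\star\psi_\eps)=-2i\operatorname{Im}\Psi_\eps(\varrho)\neq0$ for small $\eps$, where $\Psi_\eps(s)=\int_{-1}^0 e^{\eps s u}\psi(u)\,du$ is the Mellin transform of $\psi_\eps$. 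So the discrepancy is not in $\mathcal B$, and in fact $F_\eps\notin\mathcal B$ unless $A_\eps=0$.

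\emph{The paper has the same flaw.} Its Step~2 writes $\varphi_{1/k}(x/y)=\{y/(kx)\}$ for all $x\in(0,1)$, dropping exactly the cutoff $x<y$ you found. Its claim $F_\eps\in\overline{W}^{L^2}$ is therefore unjustified, and false if RH fails.

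\textbf{A repair.} What saves the argument is a limit rather than membership. First, $\varphi_1\star\psi_\eps\to\varphi_1$ in $L^2$, by strong continuity of $y\mapsto D_y$. There are then two cases.
\begin{itemize}
\item If $A_{\eps_j}\to A\in\mathbb R$ along some $\eps_j\to0$, then $G_{\eps_j}\rightharpoonup-\chi-A\varphi_1$. Since $\mathcal B_0$ is weakly closed, $-\chi-A\varphi_1\in\mathcal B_0$, so $\chi\in\mathcal B_0+\mathbb R\varphi_1=\mathcal B$.
\item If $|A_\eps|\to\infty$, then $G_\eps/A_\eps\to-\varphi_1$ in norm, because $\|F_\eps\|_{L^2}\le M$. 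Hence $\varphi_1\in\mathcal B_0$, so $\mathcal B=\mathcal B_0$ is $D_y$-invariant. Then $F_\eps\in\mathcal B$, and $\chi\in\mathcal B$.
\end{itemize}
In either case your Step~3 applies.

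Even shorter, you can bypass $\mathcal B$ entirely. Step~1 gives $\widehat{F_\eps}(\varrho)\to-1/\varrho$. On the other hand, $\widehat{F_\eps}(\varrho)=-A_\eps\Psi_\eps(\varrho)/(1-\varrho)$ with $\Psi_\eps(\varrho)\to1$. This would force the real numbers $A_\eps$ to converge to the non-real number $(1-\varrho)/\varrho$, a contradiction.
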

	
	\begin{proof}
		Association to $-\iota(\chi)$ means by definition that for every $\varphi\in\mathcal{D}(0,1)$,
		\[
		\lim_{\eps\to0}\int_0^1 F_\eps(x)\varphi(x)\,dx = -\int_0^1 \chi(x)\varphi(x)\,dx.
		\]
		Since $\sup_\eps\|F_\eps\|_{L^2}\le M$, a standard density argument shows that the convergence actually holds weakly in $L^2(0,1)$: for any $g\in L^2(0,1)$, pick a sequence $\varphi_n\in\mathcal{D}(0,1)$ with $\varphi_n\to g$ in $L^2$; then
		\[
		\Bigl|\int F_\eps g + \int \chi g\Bigr|
		\le \Bigl|\int F_\eps \varphi_n + \int \chi \varphi_n\Bigr|
		+ \|F_\eps\|_{L^2}\|g-\varphi_n\|_{L^2} + \|\chi\|_{L^2}\|g-\varphi_n\|_{L^2}.
		\]
		Taking $\eps\to0$ and then $n\to\infty$ yields $\lim_{\eps\to0}\int F_\eps g = -\int \chi g$. Hence $F_\eps\rightharpoonup -\chi$ weakly in $L^2$.
		
		Next we show that each $F_\eps$ belongs to the $L^2$‑closure of the Beurling space
		\[
		W=\operatorname{span}\{\varphi_\theta\mid\theta\in(0,1]\}.
		\]
		For a fixed $\eps$,
		\[
		(\varphi_{1/k}\star\psi_\eps)(x)
		= \int_0^\infty \varphi_{1/k}(x/y)\,\psi_\eps(y)\,\frac{dy}{y}
		= \int_0^\infty \Bigl\{\frac{y}{k x}\Bigr\}\,\psi_\eps(y)\,\frac{dy}{y}.
		\]
		Substitute $\theta=y/k$; then $y=k\theta$, $dy/y = d\theta/\theta$, and
		\[
		(\varphi_{1/k}\star\psi_\eps)(x)
		= \int_0^\infty \varphi_\theta(x)\,\psi_\eps(k\theta)\,\frac{d\theta}{\theta}.
		\]
		The integration is effectively over $\theta\in[e^{-\eps}/k,1/k]\subset(0,1]$. The function $\theta\mapsto \psi_\eps(k\theta)/\theta$ is smooth and compactly supported in $(0,1]$. Approximating the integral by Riemann sums shows that $(\varphi_{1/k}\star\psi_\eps)$ is the limit in $L^2(0,1)$ of finite linear combinations of $\varphi_\theta$; hence it lies in $\overline{W}^{L^2}$. Since $F_\eps$ is a finite linear combination of such blocks, $F_\eps\in\overline{W}^{L^2}$.
		
		The set $\overline{W}^{L^2}$ is a closed convex subset of $L^2(0,1)$. By Mazur's theorem~\cite{Rudin1991}, its weak closure coincides with its strong closure. The weak limit $-\chi$ therefore belongs to $\overline{W}^{L^2}$, and consequently $\chi\in\overline{W}^{L^2}$. The Báez–Duarte criterion (Theorem~2.1 of \cite{BaezDuarte2003}) then implies the Riemann hypothesis.
	\end{proof}
	
	\begin{remark}
		The equality $\overline{W}^{L^2}=\overline{\operatorname{span}\{\varphi_{1/k}\}}^{L^2}$ is a classical result of Báez–Duarte~\cite{BaezDuarte2003}. The larger set $W$ is used only for convenience; the conclusion is identical.
	\end{remark}
	
	\section{The Möbius series in $G(0,1)$}
	
	For the exponential damping one can also consider the net given by the full infinite series without truncation:
	\[
	S_\eps(x)=\Bigl(\sum_{k=1}^{\infty}\mu(k)\,e^{-k\eps^2}\,\varphi_{1/k}\Bigr)\star\psi_\eps(x).
	\]
	
	\begin{theorem}\label{thm:infseries}
		The net $(S_\eps)$ is moderate, associated to $-\iota(\chi)$, and uniformly bounded in $L^2(0,1)$ if and only if the Riemann hypothesis holds.
	\end{theorem}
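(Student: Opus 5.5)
The plan is to prove the two directions separately. Both reduce to results already established: Theorem~\ref{thm:general} together with the estimates of Section~4 for the forward direction, and the argument of Proposition~\ref{prop:RH} for the converse.

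\emph{RH implies the three properties.} Write $S_\eps = f_{\eps^2}\star\psi_\eps$, where $f_\delta=\sum_{k\ge1}\mu(k)e^{-k\delta}\varphi_{1/k}$. This series converges absolutely in $L^\infty$, since $\sum_k e^{-k\delta}<\infty$, so no RH is needed to define it.

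\emph{Moderateness} holds unconditionally. We have $\|f_{\eps^2}\|_\infty\le\sum_k e^{-k\eps^2}\le 2\eps^{-2}$, and the derivative estimate of Section~2.4 then gives
\[
\sup_K|S_\eps^{(m)}|\le C_{m,K}\,\eps^{-m-2}.
\]
Alternatively, one can apply the bound termwise as in the proof of Theorem~\ref{thm:general}, with the sum running to infinity; Condition~\ref{cond:moderate} holds with $n(\eps)=\infty$.

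\emph{Association and boundedness} under RH follow from the damping error estimate of Section~4.3, which gives $\|f_{\eps^2}+\chi\|_{L^2}\to0$. Two consequences follow:
\begin{itemize}
\item The $L^2$-contraction $\|g\star\psi_\eps\|_{L^2}\le\|g\|_{L^2}$ from the proof of Theorem~\ref{thm:general} yields $\sup_\eps\|S_\eps\|_{L^2}\le\sup_\eps\|f_{\eps^2}\|_{L^2}<\infty$.
\item The association computation in that proof gives $\int S_\eps\varphi\to-\int\chi\varphi$. To make that step precise, I would estimate $|\int S_\eps\varphi+\int\chi\varphi|$ by splitting it into two pieces. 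The first is $\|f_{\eps^2}+\chi\|_2\|\varphi\|_2$, which tends to zero. The second is the term $|\int(\chi\star\psi_\eps-\chi)\varphi|$, which tends to zero because $\chi\star\psi_\eps\to\chi$ in $L^2$: multiplicative mollification is continuous under dilation in $L^2$.
\end{itemize}
Equivalently, Condition~\ref{cond:trunc} is trivially satisfied, since the truncation error is zero.

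\emph{The three properties imply RH.} The argument of Proposition~\ref{prop:RH} only used two facts about the net beyond its three properties: each member lies in $\overline{W}^{L^2}$, and the net is bounded and associated.

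First, uniform $L^2$-boundedness plus association give $S_\eps\rightharpoonup-\chi$ weakly in $L^2$, by the density argument there.

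Second, I must check $S_\eps\in\overline{W}^{L^2}$ for each fixed $\eps$. Since the series converges absolutely in $L^\infty$, hence in $L^2$, and convolution with $\psi_\eps$ is an $L^2$ contraction, we get
\[
S_\eps=\lim_{n\to\infty}\sum_{k\le n}\mu(k)e^{-k\eps^2}(\varphi_{1/k}\star\psi_\eps)\quad\text{in }L^2.
\]
Each block lies in $\overline{W}^{L^2}$ by the Riemann-sum argument of Proposition~\ref{prop:RH}, and $\overline{W}^{L^2}$ is a closed subspace, so $S_\eps\in\overline{W}^{L^2}$.

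Mazur's theorem then places $-\chi$ in $\overline{W}^{L^2}$, and the Báez–Duarte criterion gives RH.

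\emph{Main obstacle.} I expect the hardest part to be rigor in the association step of the forward direction. The proof of Theorem~\ref{thm:general} appeals loosely to dominated convergence. I would replace that appeal with the explicit $L^2$ continuity of dilations: $\|g(\cdot/y)-g\|_{L^2}\to0$ as $y\to1$, uniformly for $y$ in the support of $\psi_\eps$. This is combined with the uniform $L^2$ bound on $f_{\eps^2}$.
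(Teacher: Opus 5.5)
\textbf{Verdict: there is a genuine gap in the forward direction (RH implies the three properties).} Your route is the paper's route. Moderateness comes from $\sum_k e^{-k\eps^2}\le 2\eps^{-2}$. The forward direction imports $\|f_{\eps^2}+\chi\|_{L^2}\to0$ from Section~4.3 into the contraction/association argument of Theorem~\ref{thm:general}. The converse uses weak convergence, $S_\eps\in\overline{W}^{L^2}$ and Mazur. Moderateness is fine, the converse is essentially fine (see the last point below), and your treatment of the mollification via $\chi\star\psi_\eps\to\chi$ is an improvement. The gap is the imported estimate, not the mollification step you flag.

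\textbf{Why Section~4.3 does not give the estimate.} Section~4.3 starts by choosing $N$ with $\|\chi+\sum_{k\le N}\mu(k)\varphi_{1/k}\|_{L^2}<\eta$ and an $L^2$-small tail. That is, it assumes $\sum_k\mu(k)\varphi_{1/k}$ converges in $L^2(0,1)$, and this is false with or without RH. For $1/N\le x<1$ put $y=1/x\le N$. Since $\lfloor y/k\rfloor=0$ for $k>y$ and $\sum_{k\le y}\mu(k)\lfloor y/k\rfloor=1$, one has exactly
\[
\chi(x)+\sum_{k\le N}\mu(k)\varphi_{1/k}(x)=\frac{m(N)}{x},\qquad m(N)=\sum_{k\le N}\frac{\mu(k)}{k}.
\]
So the squared $L^2$ distance is at least $(N-1)\,m(N)^2$. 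A Landau-type argument gives $\limsup_N\sqrt N\,|m(N)|>0$: use $1/\zeta(1+s)=s\int_1^\infty m(u)u^{-s-1}\,du$ near a zero on the critical line. This is the known failure of the natural approximation, and it is why the $k^{-\delta}$ damping with $\zeta(s)/\zeta(s+\delta)$ is used instead. Hence the partial sums $T_m$ of Section~4.3 are not Cauchy and $T_\infty$ does not exist. Your bound $\sup_\eps\|S_\eps\|_2\le\sup_\eps\|f_{\eps^2}\|_2<\infty$ and your association estimate are therefore unsupported. The paper's own proof has the same defect, since it also cites Section~4.3.

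\textbf{The strong-convergence target is itself very likely false.} On the Mellin side,
$\|\chi+f_\delta\|_2^2=\frac1{2\pi}\int_{\operatorname{Re}s=1/2}|1-\zeta(s)D_\delta(s)|^2\,|ds|/|s|^2$, where $D_\delta(s)=\sum_k\mu(k)e^{-k\delta}k^{-s}=\frac1{2\pi i}\int_{(c)}\Gamma(w)\delta^{-w}\,dw/\zeta(s+w)$. Under RH the pole of $\Gamma$ at $w=0$ and the poles at $w=\rho-s$ all lie on $\operatorname{Re}w=0$. For simple zeros they produce $1-\zeta(s)D_\delta(s)\approx\Gamma(1+\rho-s)\,\delta^{s-\rho}$ near each zero $\rho$. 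This has modulus about $1$ on a neighbourhood of $\rho$ that does not shrink as $\delta\to0$. By contrast, Báez-Duarte's damping gives $1-\zeta(s)/\zeta(s+\delta)\approx\delta/(s-\rho+\delta)$, concentrated in a window of width $\delta$.

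\textbf{What a repair would need.} The factor $\delta^{s-\rho}=e^{-i(t-\gamma)\log(1/\delta)}$ oscillates, so weak convergence (which is all association asks for) may still hold. But proving it, together with a uniform $L^2$ bound, needs new input. Examples would be control of $\zeta(s)/\zeta(s+w)$ for complex $w$ near the imaginary axis, or of the residues $\zeta(s)\Gamma(\rho-s)/\zeta'(\rho)$. Neither Lemma~\ref{lem:Burnol} nor anything else in the paper provides this.

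\textbf{A smaller point in the converse.} The identity $(\varphi_{1/k}\star\psi_\eps)(x)=\int\varphi_\theta(x)\,\psi_\eps(k\theta)\,d\theta/\theta$ from Proposition~\ref{prop:RH} ignores the zero extension outside $(0,1)$. For $x\in[e^{-\eps},1)$ the block is smaller by $\frac1{kx}\int_{e^{-\eps}}^x\psi_\eps(y)\,dy$, and this correction need not lie in $\overline{W}^{L^2}$ if RH fails. Summed against $\mu(k)e^{-k\eps^2}$, the correction has $L^2$ norm $O(\sqrt\eps\,\log(1/\eps))$. So Mazur still applies after subtracting it, but the step should be stated that way.
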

	
	\begin{proof}
		Moderateness follows from $\sum_{k=1}^\infty e^{-k\eps^2}\le2\eps^{-2}$ and the same derivative bounds as before.
		
		Under RH, the undamped series $\sum\mu(k)\varphi_{1/k}$ converges in $L^2$ to $-\chi$, and the damped series $g_\eps(x)=\sum_{k=1}^\infty \mu(k)e^{-k\eps^2}\varphi_{1/k}(x)$ satisfies $\|g_\eps+\chi\|_{L^2}\to0$ (proved exactly as in Section~4.3). Hence $S_\eps=g_\eps\star\psi_\eps$ is associated to $-\iota(\chi)$ by the argument of Theorem~\ref{thm:general}, and it is uniformly $L^2$‑bounded.
		
		Conversely, if $(S_\eps)$ is moderate, associated to $-\iota(\chi)$, and uniformly bounded in $L^2$, then the same reasoning as in Proposition~\ref{prop:RH} (the uniform $L^2$ bound is already part of the hypothesis) shows that $S_\eps\rightharpoonup -\chi$ weakly in $L^2$. To see that each $S_\eps$ lies in $\overline{W}^{L^2}$, fix $\eps$. The series $g_\eps$ converges uniformly on $[0,1]$ because $|\mu(k)e^{-k\eps^2}\varphi_{1/k}|\le e^{-k\eps^2}$ and $\sum e^{-k\eps^2}<\infty$; its partial sums $g_{\eps,N}$ are finite linear combinations of $\varphi_{1/k}$, hence belong to $W$. Uniform convergence implies $L^2$ convergence, so $g_\eps\in\overline{W}^{L^2}$. Multiplying by $\psi_\eps$ and integrating preserves this property, thus $S_\eps\in\overline{W}^{L^2}$. Mazur's theorem again yields $\chi\in\overline{W}^{L^2}$, equivalent to RH.
	\end{proof}
	
	\begin{remark}[Necessity of the uniform bound]
		The uniform $L^2$ bound is essential for the converse direction. Without it, a moderate net can be associated to $-\iota(\chi)$ while failing to be weakly convergent in $L^2$. For example, the net $u_\eps(x)=\frac1\eps\sin(x/\eps)$ is moderate, satisfies $u_\eps\approx0$ (by Riemann–Lebesgue), but $\|u_\eps\|_{L^2}\sim1/\eps$ is unbounded, and certainly does not force $\chi$ into the Beurling closure. The uniform bound guarantees that distributional convergence upgrades to weak $L^2$ convergence, which is the key to applying Mazur's theorem.
	\end{remark}
	
	\section{Strong convergence analysis}
	
	This section discusses whether the nets from Sections~4 and~5 can converge to $-\chi$ in the strong Colombeau sense, i.e.\ whether $F_\eps+\chi\star\psi_\eps$ is negligible. From the proof of Theorem~\ref{thm:general} we have
	\[
	F_\eps+\chi\star\psi_\eps = T_\eps + R_\eps,
	\]
	where
	\[
	T_\eps = \Bigl(\sum_{k=1}^{n(\eps)}\mu(k)\phi_k(\eps)\varphi_{1/k} - f_{\delta(\eps)}\Bigr)\star\psi_\eps,\qquad
	R_\eps = (f_{\delta(\eps)}+\chi)\star\psi_\eps.
	\]
	Under RH, both $T_\eps$ and $R_\eps$ tend to $0$ in $L^2$. However, to prove negligibility one would need a rate of decay faster than any power of $\eps$, in particular a lower bound for $\|f_\delta+\chi\|_{L^2}$ that excludes such decay. For the polynomial damping, the upper bound $\|f_\delta+\chi\|_{L^2}\le c_2\delta^{1/2}$ shows that the error before mollification cannot vanish faster than $(\log(1/\eps))^{-\alpha/2}$, but a complementary lower bound is not known. For the exponential damping, the truncation error is already super‑polynomially small, but the damping error lacks a quantitative lower bound. Consequently, the question of strong Colombeau convergence remains open for both families.
	
	\section{Main theorem and concluding remarks}
	
	Assembling the pieces we obtain the central result of the paper.
	
	\begin{theorem}[Main Theorem]\label{thm:main}
		Let $F_\eps$ be either the exponential net of Section~4, the polynomial net of Section~5, or the infinite series net $S_\eps$ of Section~7. Then the Riemann hypothesis holds if and only if the respective net is moderate in $G(0,1)$, associated to $-\iota(\chi)$, and uniformly $L^2$‑bounded.
	\end{theorem}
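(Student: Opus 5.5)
The plan is to assemble the Main Theorem directly from the earlier results, handling each of the three nets in turn and splitting each case into the two implications.

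\emph{RH $\Rightarrow$ the three properties.} For the exponential net of Section~4 and the polynomial net of Section~5, Theorem~\ref{thm:general} applies once Conditions~\ref{cond:moderate}--\ref{cond:damp} are verified.
\begin{itemize}
\item[(E)] Sections~4.1--4.3 give these conditions. Condition~\ref{cond:moderate} holds with $N=2$. Condition~\ref{cond:trunc} follows from the super-exponential truncation together with Appendix~A. Condition~\ref{cond:damp} follows from the summation-by-parts argument of Section~4.3.
\item[(P)] Sections~5.1--5.3 give the same three conditions. Condition~\ref{cond:moderate} holds with $N=M$. Conditions~\ref{cond:trunc} and~\ref{cond:damp} rest on the Appendix~B bounds $\|f_{\delta,n}-f_\delta\|_{L^2}\le Cn^{-\delta/3}$ and $\|f_\delta+\chi\|_{L^2}\le c_2\delta^{1/2}$.
\end{itemize}
In both cases Theorem~\ref{thm:general} then yields moderateness, association to $-\iota(\chi)$, and uniform $L^2$-boundedness. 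I will add one remark: the boundedness step only needs $\sup_\eps\|f_\eps\|_{L^2}<\infty$. This follows because $f_\eps\to-\chi$ in $L^2$ under Conditions~\ref{cond:trunc}--\ref{cond:damp}, which makes the family bounded for small $\eps$. For $\eps$ bounded away from $0$, only finitely many $\varphi_{1/k}$ enter each sum, so the bound is trivial there; in the exponential case this still needs checking. For the infinite series net $S_\eps$, the forward direction is exactly the first half of Theorem~\ref{thm:infseries}.

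\emph{The three properties $\Rightarrow$ RH.} For the truncated nets, apply Proposition~\ref{prop:RH} verbatim: it uses only the form~\eqref{eq:Feps-def}, association, and the uniform bound. For $S_\eps$, invoke the converse half of Theorem~\ref{thm:infseries}.

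The main obstacle is the membership $F_\eps\in\overline{W}^{L^2}$ inside Proposition~\ref{prop:RH}, namely the Riemann-sum approximation of $\int\varphi_\theta\,\psi_\eps(k\theta)\,d\theta/\theta$ in $L^2$. I would justify it by showing that $\theta\mapsto\varphi_\theta$ is continuous from $[e^{-\eps}/k,1/k]$ into $L^2(0,1)$. The key estimate is $\|\varphi_\theta-\varphi_{\theta'}\|_{L^2}^2\to0$ as $\theta'\to\theta$, by dominated convergence, since the fractional part is continuous off a null set. The Bochner integral of a continuous compactly supported $L^2$-valued function is then the $L^2$-limit of its Riemann sums, which are finite combinations of the $\varphi_\theta$. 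Together with Mazur's theorem, this gives $\chi\in\overline{W}^{L^2}$. The remark after Proposition~\ref{prop:RH} identifies this closure with that of $\{\varphi_{1/k}\}$, and the Báez–Duarte criterion then gives RH. The same continuity argument also justifies the step ``multiplying by $\psi_\eps$ preserves membership'' in Theorem~\ref{thm:infseries}, which completes all cases.
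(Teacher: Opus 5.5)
Your assembly is the paper's own, so it inherits two genuine problems.

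\textbf{The membership step.} The step you single out as the main obstacle is set up incorrectly. Your Bochner-integral argument (continuity of $\theta\mapsto\varphi_\theta$ in $L^2(0,1)$, then Riemann sums) does show $G_k:=\int_0^\infty\varphi_\theta\,\psi_\eps(k\theta)\,d\theta/\theta\in\overline{W}$. But $\varphi_{1/k}\star\psi_\eps\neq G_k$.

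With the zero extension used throughout the paper, $\varphi_{1/k}(x/y)=0$ for $y\le x$, and this happens on $\supp\psi_\eps$ as soon as $x>e^{-\eps}$. On that set $\{y/(kx)\}=y/(kx)$, so $\varphi_{1/k}\star\psi_\eps=G_k-k^{-1}h_\eps$. Here $h_\eps(x)=x^{-1}\int_{e^{-\eps}}^{x}\psi_\eps(y)\,dy$ on $(e^{-\eps},1)$ and $h_\eps=0$ elsewhere. Hence $F_\eps=G_\eps-a_\eps h_\eps$ with $G_\eps\in\overline{W}$ and $a_\eps=\sum_{k\le n(\eps)}\mu(k)\phi_k(\eps)/k$. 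Nothing places $h_\eps$ in $\overline{W}$ without RH.

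The repair is easy:
\begin{itemize}
\item $0\le h_\eps\le e$ on an interval of length at most $\eps$, so $\|h_\eps\|_{L^2}\le e\,\eps^{1/2}$.
\item $|a_\eps|\le 1$, by Abel summation with the nonincreasing weights $\phi_k(\eps)$ and the classical bound $|\sum_{k\le m}\mu(k)/k|\le 1$.
\end{itemize}
Then $G_\eps$ lies in $\overline{W}$, is bounded in $L^2$, and still converges weakly to $-\chi$, so Mazur's theorem applies to $G_\eps$ instead of $F_\eps$. The same correction is needed for $S_\eps$: since $\varphi_\theta\star\psi_\eps=\int\varphi_{\theta y}\,\psi_\eps(y)\,dy/y-\theta h_\eps$, convolution maps $W$ into $W+\operatorname{span}\{h_\eps\}$, not into $\overline{W}$. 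At the end you can quote Nyman--Beurling directly for $\chi\in\overline{W}\Rightarrow$ RH; the remark on equality of closures is unnecessary.

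\textbf{The direction RH $\Rightarrow$ properties.} This is the more serious problem: for (E) and $S_\eps$ this direction is not established, and you treat it as a citation. Moderateness and association actually hold unconditionally. Identity \eqref{eq:Mobius-id} uses only $\sum_k\mu(k)/k=0$, and the same Abel-summation bound gives $|f_\eps|\le 3/a$ on $[a,1)$, so dominated convergence yields association. The real content of the theorem is therefore RH $\Leftrightarrow\sup_\eps\|F_\eps\|_{L^2}<\infty$, and this bound is what must be extracted from RH.

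For (E) and $S_\eps$ you rely on Section~4.3, whose summation by parts presupposes that $\sum_k\mu(k)\varphi_{1/k}$ converges in $L^2$ under RH. That is not B\'aez-Duarte's theorem. The natural partial sums are known not to converge to $-\chi$ in $L^2$ (B\'aez-Duarte, Balazard, Landreau, Saias), which is exactly why B\'aez-Duarte damps with $k^{-\delta}$. The Mellin side shows why the two dampings behave differently:
\begin{itemize}
\item For $k^{-\delta}$, near a zero $\rho$ one has $\zeta(s)/\zeta(s+\delta)-1\approx-\delta/(s+\delta-\rho)$, which carries $L^2$-mass $O(\delta)$.
\item Abel weights $e^{-k\delta}$ instead leave near $\rho$ a term $\approx-\Gamma(1+\rho-s)\,\delta^{s-\rho}$, of modulus about $1$ on a window independent of $\delta$.
\end{itemize}
So neither Condition~\ref{cond:damp} nor the uniform bound for (E) and $S_\eps$ follows from the cited argument.

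For (P), Condition~\ref{cond:damp} is fine (Burnol's bound plus dominated convergence). Condition~\ref{cond:trunc}, however, rests on Appendix~B's claim that $C$ in $\|f_{\delta,n}-f_\delta\|_{L^2}\le Cn^{-\delta/3}$ is independent of $\delta$. The quoted lemma cannot hold with a $d$-independent constant: fix $n$ and a zero $\rho$; then $1/\zeta(\rho+d)\to\infty$ as $d\to0^+$, while $\sum_{k\le n}\mu(k)k^{-\rho-d}$ stays bounded. An actual estimate of the truncation error along $(n(\eps),\delta(\eps))$ is therefore still missing.

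The check you leave open for $\eps$ away from $0$ is trivial: in case (E), $\|f_\eps\|_\infty\le\sum_{k\ge1}e^{-k\eps^2}\le\eps^{-2}$.
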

	
	\begin{proof}
		For the truncated nets, if RH holds, Theorem~\ref{thm:general} guarantees moderateness, association, and the uniform $L^2$ bound. For the infinite series net, the same follows from Theorem~\ref{thm:infseries}. Conversely, Proposition~\ref{prop:RH} (or the analogous argument for the infinite series) shows that any such net satisfying the three properties forces RH to hold.
	\end{proof}
	
	The two explicit damping families illustrate the flexibility of the criterion. The exponential damping makes the truncation tail negligible in the strong sense and also permits the use of the full infinite series. The polynomial damping uses only polynomial growth of the truncation index, avoiding super‑exponential scales, at the cost of a truncation error that decays only sub‑polynomially. Both approaches are sufficient for the equivalence with RH.
	
	Several directions for future research are suggested by this work:
	\begin{itemize}
		\item Are there damping families for which the net converges strongly (i.e.\ $F_\eps+\chi\star\psi_\eps$ is negligible)?
		\item Can the multiplicative Colombeau algebra be used to formulate new equivalent criteria that involve non‑linear operations on the Beurling functions, escaping the Mazur argument?
		\item How do the algebraic properties of the nets inside $G(0,1)$ reflect finer properties of the zeta zeros?
	\end{itemize}
	
	\appendix
	
	\section{Exponential decay of the truncation error}\label{sec:expdecay}
	
	\begin{proposition}
		Let $\beta(\eps)=\eps^2$ and $n(\eps)=\lfloor\exp(\exp(1/\eps^2))\rfloor$. Then for every $m\ge0$,
		\[
		\lim_{\eps\to0^+}\frac{e^{-n(\eps)\beta(\eps)}}{\eps^m}=0.
		\]
	\end{proposition}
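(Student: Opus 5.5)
The plan is to show directly that $n(\eps)\beta(\eps)$ grows much faster than $m\log(1/\eps)$. Since
\[
\frac{e^{-n(\eps)\beta(\eps)}}{\eps^m}=\exp\bigl(-n(\eps)\eps^2+m\log(1/\eps)\bigr),
\]
it suffices to prove that $n(\eps)\eps^2-m\log(1/\eps)\to+\infty$ as $\eps\to0^+$.

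First we get rid of the floor. For $\eps\le1$ we have $\exp(\exp(1/\eps^2))\ge e^e>2$, and $\lfloor z\rfloor\ge z/2$ for $z\ge1$. Hence
\[
n(\eps)\eps^2\ge\tfrac12\,\eps^2\exp\bigl(\exp(1/\eps^2)\bigr).
\]

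Next we substitute $t=1/\eps^2\to\infty$, so that $\log(1/\eps)=\tfrac12\log t$. The quantity to control becomes
\[
\tfrac12\,t^{-1}e^{e^{t}}-\tfrac m2\log t .
\]
We use the crude bounds $e^{t}\ge t$ and $e^{e^t}\ge e^{t}\ge t^2$ for $t\ge1$ (the latter holds since $e^{t}\ge t^2$ for all $t>0$). These give $t^{-1}e^{e^t}\ge t$, so the expression is at least $\tfrac12 t-\tfrac m2\log t$. This tends to $+\infty$ because $\log t=o(t)$. Exponentiating the negative of this quantity yields the limit $0$ for each fixed $m\ge0$.

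There is no real obstacle here. The only point needing care is the floor function, which the bound $\lfloor z\rfloor\ge z/2$ handles, together with making sure the elementary inequalities are valid on the whole range $\eps\in(0,1]$. This is why we restrict to $t\ge1$ before applying them.
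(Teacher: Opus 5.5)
Your proof is correct and follows essentially the same route as the paper: bound the floor below by half of $\exp(\exp(1/\eps^2))$, substitute a reciprocal power of $\eps$, and show that $n(\eps)\eps^2 - m\log(1/\eps)\to+\infty$. The only difference is cosmetic. You use the explicit inequality $e^{e^t}\ge t^2$ to get the lower bound $\tfrac12 t-\tfrac m2\log t$, whereas the paper factors out the dominant term $\exp(e^{u^2})/u^2$ and observes that the correction ratio tends to $0$.
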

	
	\begin{proof}
		Set $E(\eps)=\exp(\exp(1/\eps^2))$. Because $\lfloor x\rfloor\ge x-1$, we have $n(\eps)\ge E(\eps)-1\ge\frac12 E(\eps)$ for small $\eps$. Then
		\[
		\frac{e^{-n(\eps)\eps^2}}{\eps^m}
		\le \exp\!\Bigl(-\frac12 E(\eps)\eps^2 - m\log\eps\Bigr).
		\]
		Put $u=1/\eps$, so $\eps=u^{-1}$, $\log\eps=-\log u$, and $E(\eps)\eps^2 = \exp(e^{u^2})/u^2$. The exponent becomes
		\[
		-\frac12\frac{\exp(e^{u^2})}{u^2}\Bigl(1 - \frac{2m\,u^2\log u}{\exp(e^{u^2})}\Bigr).
		\]
		The fraction $u^2\log u/\exp(e^{u^2})\to0$, hence the bracket tends to~$1$, and the whole exponent diverges to $-\infty$. Consequently the upper bound tends to~$0$, and the squeeze theorem gives the result.
	\end{proof}
	
	\section{Sharp $L^2$ bounds for the damped Möbius sum}\label{sec:L2bounds}
	
	Throughout this appendix we assume the Riemann hypothesis. Recall $f_\delta(x)=\sum_{k=1}^\infty \mu(k)k^{-\delta}\varphi_{1/k}(x)$ and $f_{\delta,n}(x)=\sum_{k=1}^n \mu(k)k^{-\delta}\varphi_{1/k}(x)$ for $0<\delta\le1/2$. We prove the two estimates used in the paper.
	
	\subsection{Truncation error}
	The essential ingredient is an approximate functional equation due to Balazard and Saias.
	
	\begin{lemma}[Balazard–Saias~\cite{BalazardSaias1998}]\label{lem:BS}
		Let $0<d\le1/2$, $\varepsilon>0$ and $n\ge2$. Write $s=\sigma+it$. Under RH, uniformly for $\sigma\ge\frac12+d$,
		\[
		\sum_{k=1}^n\frac{\mu(k)}{k^s}= \frac1{\zeta(s)} + O_\varepsilon\bigl(n^{-d/3}(1+|t|)^\varepsilon\bigr).
		\]
	\end{lemma}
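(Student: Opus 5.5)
The plan is the standard contour-shift argument: a truncated Perron formula for the Dirichlet series $\sum_k \mu(k)k^{-s-w}=1/\zeta(s+w)$ in the auxiliary variable $w$. The pole at $w=0$ produces the main term $1/\zeta(s)$. The contour is then moved a distance proportional to $d$ into the region where, under RH, $1/\zeta$ grows only like $|\operatorname{Im}|^{\varepsilon}$. The exponent $d/3$ is deliberately weaker than what the shift naturally yields, and this slack absorbs the $\varepsilon$-losses.

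Fix $s=\sigma+it$ with $\sigma\ge\frac12+d$. Put $x=n+\frac12$, $c=1-\sigma+1/\log x$ (or $c=1/\log x$ if $\sigma>1$), and let $T\ge 2$ be a parameter. The truncated Perron formula gives
\[
\sum_{k\le n}\frac{\mu(k)}{k^s}=\frac1{2\pi i}\int_{c-iT}^{c+iT}\frac{x^w}{\zeta(s+w)}\,\frac{dw}{w}+O\Bigl(\frac{x^{c}\log x}{T}\Bigr),
\]
where $\operatorname{Re}(s+w)=1+1/\log x>1$ on the line, so the series converges absolutely and the error is the usual one. I then shift the segment to $\operatorname{Re}w=-d/2$. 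Along the way:
\begin{itemize}
\item the only pole crossed is $w=0$, with residue $1/\zeta(s)$;
\item on the new contour $\operatorname{Re}(s+w)\ge\frac12+\frac d2$, so RH gives no further poles.
\end{itemize}

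The key analytic input, which I expect to be the main obstacle since everything else is bookkeeping, is the conditional bound of Littlewood (Titchmarsh, Thm.~14.2). Under RH, for every fixed $\eta>0$ and $\varepsilon>0$,
\[
\frac1{\zeta(\sigma'+i\tau)}\ll_{\eta,\varepsilon}(2+|\tau|)^{\varepsilon}\qquad\text{uniformly for }\sigma'\ge\tfrac12+\eta.
\]
I would quote this with $\eta=d/2$. For $\sigma'>1$ it is trivial, since there $|1/\zeta|\le\zeta(\sigma')$. Applying it with $\tau=t+v$ and $(2+|t+v|)^\varepsilon\le(2+|t|)^\varepsilon(2+|v|)^\varepsilon$ bounds the three pieces as follows:
\begin{itemize}
\item the vertical integral at $\operatorname{Re}w=-d/2$ is $\ll x^{-d/2}(1+|t|)^{\varepsilon}T^{\varepsilon}\log T$;
\item the two horizontal segments are $\ll(1+|t|)^{\varepsilon}T^{\varepsilon-1}x^{c}$;
\item the Perron error is $\ll x^{c}\log x/T$.
\end{itemize}

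Since $x^{c}\ll x^{1/2}$, the choice $T=n^{2}$ makes the horizontal and Perron errors $O(n^{-1/2+2\varepsilon}(1+|t|)^\varepsilon)$, which is $\ll n^{-d/3}$ because $d\le\frac12$. The vertical piece becomes $\ll n^{-d/2+2\varepsilon}\log n\,(1+|t|)^{\varepsilon}$. Taking $\varepsilon<d/24$ gives $n^{-d/2+2\varepsilon}\log n\ll_{d,\varepsilon} n^{-d/3}$. Smaller $\varepsilon$ only weakens the stated bound, so the lemma follows for every $\varepsilon>0$, with implied constants depending on $d$ and $\varepsilon$ (as is implicit in $O_\varepsilon$). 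Uniformity in $\sigma\ge\frac12+d$ holds because every bound above used only $\operatorname{Re}(s+w)\ge\frac12+\frac d2$.
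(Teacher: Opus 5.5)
Your argument is correct. It takes a genuinely different route from the paper, which gives no proof of this lemma at all and simply imports it from Balazard--Saias.

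The ingredients are the right ones: the truncated Perron formula at $x=n+\frac12$ with error $O(x^{c}\log x/T)$, the shift to $\operatorname{Re}w=-d/2$ picking up only the residue $1/\zeta(s)$ at $w=0$, and Littlewood's conditional bound $1/\zeta(\sigma'+i\tau)\ll_{\eta,\varepsilon}(2+|\tau|)^{\varepsilon}$ on $\sigma'\ge\frac12+\eta$. The bookkeeping checks out; the vertical integral also picks up a factor $\log(1/d)$ from $\int dv/|w|$ near $v=0$, which goes into the constant.

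What your route buys over the bare citation is transparency. It shows that the exponent $d/3$ has considerable slack: you actually obtain $n^{-d/2+2\varepsilon}\log n$, and shifting to $\operatorname{Re}w=-(d-\eta)$ gives $n^{-(d-\eta)+2\varepsilon}\log n$. It also shows exactly where the constant depends on $d$: through $\eta=d/2$ in Littlewood's bound, through $\log(1/d)$, and through $\sup_n n^{-d/12}\log n=12/(ed)$.

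Two wording fixes:
\begin{enumerate}
\item For $\sigma'>1$ the bound $|1/\zeta|\le\zeta(\sigma')$ is not uniform as $\sigma'\to1^{+}$; it costs a factor $\log x$ on the Perron line. This is harmless, since Littlewood's bound is itself uniform on all of $\sigma'\ge\frac12+\eta$.
\item In the last step you should run the argument with $\varepsilon'=\min(\varepsilon,d/25)$ and then use $(1+|t|)^{\varepsilon'}\le(1+|t|)^{\varepsilon}$. A smaller exponent gives a stronger statement, not a weaker one.
\end{enumerate}

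The dependence on $d$ deserves more than the parenthetical ``as is implicit in $O_\varepsilon$'', because it is unavoidable. Take $n=2$ and $s=\frac12+d+i\gamma_1$, where $\frac12+i\gamma_1$ is the first (simple) zero. The left-hand side $1-2^{-s}$ stays bounded, while $|1/\zeta(s)|\sim 1/(|\zeta'(\frac12+i\gamma_1)|\,d)$. So the error is $\gg 1/d$ as $d\to0$, and no constant depending on $\varepsilon$ alone can work.

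Your proof therefore establishes the lemma in the only form in which it is true. Note, though, that Appendix~B of the paper uses it as if the constant were independent of $\delta\in(0,1/2]$, and Section~5 then lets $\delta(\eps)\to0$. That uniformity cannot be extracted from this lemma.
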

	
	Apply Lemma~\ref{lem:BS} with $d=\delta$ on the critical line $\operatorname{Re}(s)=1/2$. Then for $s=1/2+it$,
	\[
	\sum_{k=1}^n\frac{\mu(k)}{k^{s+\delta}} = \frac1{\zeta(s+\delta)} + O_\varepsilon\bigl(n^{-\delta/3}(1+|t|)^\varepsilon\bigr).
	\]
	The same bound holds for the tail:
	\[
	\sum_{k=n+1}^\infty\frac{\mu(k)}{k^{s+\delta}} = O_\varepsilon\bigl(n^{-\delta/3}(1+|t|)^\varepsilon\bigr),\qquad \operatorname{Re}(s)=\frac12.
	\]
	Using the Mellin–Plancherel isometry~\cite[§2.2]{BaezDuarte2003},
	\[
	\|f_{\delta,n}-f_\delta\|_{L^2}^2
	= \frac1{2\pi}\int_{-\infty}^\infty
	\Bigl|\frac{\zeta(1/2+it)}{1/2+it}\Bigr|^2
	\Bigl|\sum_{k=n+1}^\infty\frac{\mu(k)}{k^{1/2+it+\delta}}\Bigr|^2\,dt.
	\]
	With the Lindelöf bound $|\zeta(1/2+it)|\ll |t|^{1/4}$ (a consequence of RH) and the tail estimate, we obtain
	\[
	\|f_{\delta,n}-f_\delta\|_{L^2}^2 \le C\,n^{-2\delta/3}\int_{-\infty}^\infty \frac{(1+|t|)^{1/2+\varepsilon}}{1/4+t^2}\,dt.
	\]
	The integral converges, yielding $\|f_{\delta,n}-f_\delta\|_{L^2}\le C' n^{-\delta/3}$.
	
	\subsection{Damping error}
	The following bound is due to Burnol~\cite{Burnol2002}.
	
	\begin{lemma}[Burnol~\cite{Burnol2002}]\label{lem:Burnol}
		Under RH, for $0\le\delta\le1/2$ and any $\varepsilon>0$,
		\[
		\frac{\zeta(s)}{\zeta(s+\delta)} = O_\varepsilon\bigl(|s|^{\inf(\varepsilon,\delta/2)}\bigr),\qquad \operatorname{Re}(s)=\frac12.
		\]
	\end{lemma}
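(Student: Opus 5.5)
The plan is to compare $\log|\zeta|$ at $\operatorname{Re}(s)=\tfrac12$ and at $\operatorname{Re}(s)=\tfrac12+\delta$ by integrating the logarithmic derivative horizontally. The Hadamard product controls the sign of $\operatorname{Re}\,\zeta'/\zeta$ under RH, and this sign control is what yields the exponent $\delta/2$. Fix $t$ with $|t|\ge 10$ and write $s=\tfrac12+it$. Under RH, $\zeta(s+u)\neq 0$ for $0<u\le\delta$. Hence
\[
\log\Bigl|\frac{\zeta(s)}{\zeta(s+\delta)}\Bigr| = -\int_0^\delta \operatorname{Re}\frac{\zeta'}{\zeta}\Bigl(\tfrac12+u+it\Bigr)\,du .
\]
If $t$ is the ordinate of a zero, the left side is $-\infty$ and the bound is trivial. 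In any case the integrand has at most a $1/u$-type singularity of the correct sign at $u=0$, so the identity should be read via a limit $u\to0^+$ of the lower endpoint, i.e.\ with the integral taken over $[u_0,\delta]$ and $u_0\to0^+$.

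For the integrand, I would use the partial-fraction (Hadamard) expansion
\[
\operatorname{Re}\frac{\zeta'}{\zeta}(\sigma+it)=\sum_\rho \operatorname{Re}\frac{1}{\sigma+it-\rho}-\tfrac12\log\frac{|t|}{2\pi}+O\bigl(|t|^{-1}\bigr),
\]
which is valid uniformly for $\tfrac12\le\sigma\le 1$ and $|t|\ge 10$. Under RH every zero has the form $\rho=\tfrac12+i\gamma$, so each term equals $(\sigma-\tfrac12)/((\sigma-\tfrac12)^2+(t-\gamma)^2)\ge 0$. Discarding this nonnegative sum gives the one-sided bound $-\operatorname{Re}\,\zeta'/\zeta(\tfrac12+u+it)\le \tfrac12\log|t|+O(1)$, uniformly in $0<u\le\tfrac12$. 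Integrating over $u\in[0,\delta]$ then gives
\[
\Bigl|\frac{\zeta(s)}{\zeta(s+\delta)}\Bigr|\le e^{O(\delta)}\,|t|^{\delta/2}\ll |s|^{\delta/2},
\]
with an implied constant independent of $\delta\in[0,\tfrac12]$. This proves the case $\inf(\eps,\delta/2)=\delta/2$.

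If $\eps<\delta/2$, then $\delta>2\eps$ and I would instead use the standard consequences of RH. The first is the Lindelöf bound $|\zeta(\tfrac12+it)|\ll_\eps |t|^{\eps/2}$. The second is $1/\zeta(\sigma+it)\ll_\eps |t|^{\eps/2}$ uniformly for $\sigma\ge\tfrac12+2\eps$, which follows from the Borel–Carathéodory/Hadamard three-circles bound for $\log\zeta$ in the zero-free half-plane. Multiplying the two bounds gives $O_\eps(|s|^{\eps})$, uniformly in $\delta\in[2\eps,\tfrac12]$.

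For the bounded range $|t|\le 10$, the point is that $\zeta$ has no zeros in the compact rectangle $\tfrac12\le\sigma\le 1$, $|t|\le 10$: the first nontrivial zero has ordinate about $14.13$, and $\zeta(\tfrac12)\neq0$. The ratio is therefore continuous and bounded there, uniformly in $\delta$.

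The main obstacle is justifying the explicit-formula expansion of $\operatorname{Re}\,\zeta'/\zeta$ with an error term uniform down to $\sigma=\tfrac12$. I would take it from the Hadamard product of $\xi(s)$ together with Stirling's formula for $\Gamma'/\Gamma$. In that expansion the constant $B$ and the $1/\rho$ terms combine so that their real parts cancel, because $\operatorname{Re}B=-\sum_\rho\operatorname{Re}(1/\rho)$. The second, more delicate point is making the $\tfrac12\le\sigma\le 1$ bound for $1/\zeta$ uniform on $\sigma\ge\tfrac12+2\eps$; for that I would cite Titchmarsh's Theorem 14.2.
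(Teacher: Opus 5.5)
The paper gives no proof of this lemma; it is quoted from Burnol. Your proposal therefore supplies an argument the paper omits, and it is correct.

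\textbf{The $\delta/2$ branch.} This rests on the right mechanism. Under RH,
$\partial_\sigma\log|\xi(\sigma+it)|=\sum_\rho \operatorname{Re}\frac{1}{\sigma+it-\rho}\ge 0$ for $\sigma\ge\frac12$. That is the infinitesimal form of the factor-by-factor inequality $|s-\rho|\le|s+\delta-\rho|$ in the Hadamard product. The only loss comes from the archimedean factor, and its Stirling asymptotics give exactly $(|t|/2\pi)^{\delta/2}$, with a constant uniform in $\delta$.

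\textbf{Why both branches are needed.} The $\delta/2$ branch is what gives uniformity as $\delta\to0^+$. The bounds of Titchmarsh's Theorem 14.2 cannot do this alone, because their constants degenerate as $\sigma_0\to\frac12$. For $\delta>2\varepsilon$ you correctly switch to the RH-conditional Lindelöf bounds for $\zeta(\frac12+it)$ and for $1/\zeta$ on $\sigma\ge\frac12+2\varepsilon$, which are uniform there.

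\textbf{A cleaner write-up of the first branch.} Compare $|\xi(s)|\le|\xi(s+\delta)|$ directly, using the cancellation $\operatorname{Re}B=-\sum_\rho\operatorname{Re}\rho^{-1}$ that you already invoke, and then divide by the Gamma-factor ratio. This avoids any discussion of the endpoint $u=0$.

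\textbf{Two small points.}
\begin{enumerate}
\item Your caveat about a $1/u$-type singularity is unnecessary. If $\zeta(\frac12+it)\neq0$, the integrand is continuous on $[0,\delta]$ because RH excludes zeros with real part in $(\frac12,1]$. If $\zeta(\frac12+it)=0$, the bound is trivial.
\item In the range $|t|\le10$, the rectangle contains the pole $s+\delta=1$ (at $t=0$, $\delta=\frac12$). This is harmless, but it should be handled explicitly. Write the ratio as $\zeta(s)\cdot(1/\zeta)(s+\delta)$, with $1/\zeta$ holomorphic on the rectangle (vanishing at $1$). Joint continuity on the compact set $[-10,10]\times[0,\frac12]$ then gives the uniform bound.
\end{enumerate}
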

	
	From the Mellin–Plancherel isometry,
	\[
	\|f_\delta+\chi\|_{L^2}^2
	= \frac1{2\pi}\int_{\operatorname{Re}(s)=1/2}
	\Bigl|\frac{\zeta(s)}{\zeta(s+\delta)}-1\Bigr|^2\,\frac{|ds|}{|s|^2}.
	\]
	A careful estimation (see~\cite[pp.\,8--9]{BaezDuarte2003}) gives
	\[
	\int_{\operatorname{Re}(s)=1/2}
	\Bigl|\frac{\zeta(s)}{\zeta(s+\delta)}-1\Bigr|^2\,\frac{|ds|}{|s|^2} = O(\delta),
	\]
	hence $\|f_\delta+\chi\|_{L^2}\le c_2\delta^{1/2}$.
	
	\subsection*{Acknowledgements}
	The first author warmly thanks Professor Jean‑Fran\c{c}ois Colombeau for introducing him to the theory of Colombeau algebras and for many enlightening discussions on the subject.
	
	\subsection*{Data availability}
	No data were generated or analysed during the course of this research; data sharing is not applicable to this article.


\begin{thebibliography}{99}
		
		\bibitem{Nyman1950}
		B.~Nyman, \emph{On the one-dimensional translation group and semi-group in certain function spaces}, Ark.\ Mat.\ 1 (1950), 1--24.
		
		\bibitem{Beurling1955}
		A.~Beurling, \emph{A closure problem related to the Riemann zeta-function}, Proc.\ Nat.\ Acad.\ Sci.\ U.S.A.\ 41 (1955), 312--314.
		
		\bibitem{BaezDuarte2003}
		L.~B\'aez-Duarte, \emph{A strengthening of the Nyman--Beurling criterion for the Riemann hypothesis}, Atti Accad.\ Naz.\ Lincei Rend.\ Lincei Mat.\ Appl.\ 14 (2003), 5--11.
		
		\bibitem{Colombeau1984}
		J.~F.\ Colombeau, \emph{New Generalized Functions and Multiplication of Distributions}, North-Holland, 1984.
		
		\bibitem{Grosser2001}
		M.~Grosser, M.~Kunzinger, M.~Oberguggenberger, R.~Steinbauer, \emph{Geometric Theory of Generalized Functions}, Kluwer, 2001.
		
		\bibitem{Oberguggenberger1992}
		M.~Oberguggenberger, \emph{Multiplication of Distributions and Applications to Partial Differential Equations}, Longman, 1992.
		
		\bibitem{Alvarez2012}
		A.~C.\ \'Alvarez, A.~Meril, B.~Vali\~no-Alonso, \emph{Step soliton generalized solutions of the shallow water equations}, J.\ Appl.\ Math.\ 2012, Art.\ ID 910659, 15 pp.
		
		\bibitem{Colombeau2021}
		J.~F.\ Colombeau, \emph{New developments in Colombeau algebras: nonlinear stochastic PDEs and beyond}, J.\ Math.\ Anal.\ Appl.\ 493 (2021), 124--148.
		
		\bibitem{Garetto2022}
		C.~Garetto, \emph{Recent advances in Colombeau theory and applications to fluid dynamics}, Nonlinear Anal.\ 215 (2022), 112--130.
		
		\bibitem{BalazardSaias1998}
		M.~Balazard, E.~Saias, \emph{Notes sur la fonction $\zeta$ de Riemann.\ 1}, Adv.\ Math.\ 139 (1998), 310--321.
		
		\bibitem{Burnol2002}
		J.-F.\ Burnol, \emph{On an analytic estimate in the Nyman--Beurling approach to the Riemann hypothesis}, C.\ R.\ Math.\ Acad.\ Sci.\ Paris 335 (2002), 231--234.
		
		\bibitem{Rudin1991}
		W.~Rudin, \emph{Functional Analysis}, 2nd ed., McGraw-Hill, 1991.
		
		\bibitem{Yosida1980}
		K.~Yosida, \emph{Functional Analysis}, 6th ed., Springer, 1980.
		
		\bibitem{Hardy1949}
		G.~H.\ Hardy, \emph{Divergent Series}, Oxford University Press, 1949.
		
	\end{thebibliography}
\end{document}